\documentclass[12pt, twoside]{article}
\usepackage{amsmath,amsthm,amssymb}
\usepackage{times}
\usepackage{enumerate}
\usepackage{amsmath,amsfonts}
\usepackage{epsfig}
\usepackage{amsmath}
\usepackage{amssymb}
\usepackage{amscd}
\usepackage{graphicx}
\usepackage{pstricks}
\usepackage{paralist}
\usepackage{slashed}
\usepackage{amsthm}

\pagestyle{myheadings}
\def\titlerunning#1{\gdef\titrun{#1}}
\makeatletter
\def\author#1{\gdef\autrun{\def\and{\unskip, }#1}\gdef\@author{#1}}
\def\address#1{{\def\and{\\\hspace*{18pt}}\renewcommand{\thefootnote}{}%
\footnote {#1}}%
\markboth{\autrun}{\titrun}}
\makeatother
\def\email#1{e-mail: #1}
\def\subjclass#1{{\renewcommand{\thefootnote}{}%
\footnote{\emph{Mathematics Subject Classification (2010):} #1}}}
\def\keywords#1{\par\medskip
\noindent\textbf{Keywords.} #1}

\newtheorem{thm}{Theorem}[section]

\newtheorem{lem}[thm]{Lemma}

\newtheorem{prop}[thm]{Proposition}

\theoremstyle{definition}

\newtheorem{rem}[thm]{Remark}

\numberwithin{equation}{section}

\input xy
\xyoption{all}

\def \L {\mathcal{L}}

\def \b {\beta}

\def \de {\delta}
\def \De {\Delta}
\def \la {\lambda}

\def\w {\omega}
\def\Om{\Omega}

\def\pa{\partial}
\def\na {\nabla}
\def\Ga{\Gamma}
\frenchspacing

\textwidth=15cm
\textheight=23cm
\parindent=16pt
\oddsidemargin=-0.5cm
\evensidemargin=-0.5cm
\topmargin=-0.5cm

\begin{document}

\baselineskip=17pt
\titlerunning{Instantons on Cylindrical Manifolds}
\title{Instantons on Cylindrical Manifolds}
\author{Teng Huang}
\date{}
\maketitle
\address{T. Huang (corresponding author): Department of Mathematics, University of Science and Technology of China,
Hefei, Anhui 230026, PR China; \email{oula143@mail.ustc.edu.cn}}

\subjclass{53C07,58E15}

\begin{abstract}
We consider an instanton,\ $\textbf{A}$\ ,\ with $L^{2}$-curvature $F_{\textbf{A}}$ on the cylindrical manifold $Z=\mathbf{R}\times M$,\ where $M$ is a closed Riemannian $n$-manifold,\ $n\geq 4$.\ We assume $M$ admits a $3$-form $P$ and a $4$-form $Q$ satisfy $dP=4Q$ and $d\ast_{M}{Q}=(n-3)\ast_{M}P$.\ Manifolds with these forms include nearly K\"{a}hler 6-manifolds and nearly parallel $G_{2}$-manifolds in dimension 7.\ Then we can prove that the instanton must be a flat connection.
\keywords{instantons, special holonomy manifolds, Yang-Mills connection}
\end{abstract}

\section{Introduction}

Let $X$ be an $(n+1)$-dimensional Riemannian manifold,\ $G$ be a compact Lie group and $E$ be a principal $G$-bundle on $X$.\ Let $A$ denote a connection on $E$ with the curvature $F_{A}$.\ The instanton equation on $X$ can be introduced as follows.\ Assume there is a $4$-form $Q$ on $X$.\ Then an $(n-3)$-form $\ast{Q}$ exists,\ where $\ast$ is the Hodge operator on $X$.\ A connection,\ $A$\ ,\ is called an anti-self-dual instanton,\ when it satisfies the instanton equation
\begin{equation}\label{1.1}
\ast F_{A}+\ast{Q}\wedge F_{A}=0
\end{equation}
When $n+1>4$, these equations can be defined on the manifold $X$ with a special holonomy group,\ i.e.\ the holonomy group $G$ of the Levi-Civita connection on the tangent bundle $TX$ is a subgroup of the group $SO(n+1)$.\ Each solution of equation(\ref{1.1}) satisfies the Yang-Mills equation.\ The instanton equation (\ref{1.1}) is also well-defined on a manifold $X$ with non-integrable $G$-structures,\ but equation (\ref{1.1}) implies the Yang-Mills equation will have torsion.

Instantons on the higher dimension,\ proposed in \cite{CDFJ} and studied in \cite{RRC,DT,DS,G.Tian,RSW},\ are important both in mathematics \cite{DS,G.Tian} and string theory \cite{GSW}.\ In this paper,\ we consider the cylinder manifold $Z=\mathbf{R}\times{M}$ with metric
$$g_{Z}=dt^{2}+g_{M}$$
where $M$ is a compact Riemannian manifold.\ We assume $M$ admits a $3$-form $P$ and a $4$-form $Q$ satisfying
\begin{eqnarray}\label{1.2}
&dP=4Q \\ \label{1.3}
&d\ast_{M}{Q}=(n-3)\ast_{M}{P}.
\end{eqnarray}
On $Z$,the $4$-form \cite{HN,ID} can be defined as
$$Q_{Z}=dt\wedge P+Q.$$
Then the instanton equation on the cylinder manifold $Z$ is
\begin{equation}\label{1.4}
\ast{F}_{\textbf{A}}+\ast{Q}_{Z}\wedge{F}_{\textbf{A}}=0
\end{equation}
\begin{rem}
Manifolds with $P$ and $Q$  satisfying equations (\ref{1.2}),\ (\ref{1.3}) include nearly K\"{a}hler $6$-manifolds and nearly parallel $G_{2}$-manifolds.

(1)M is a nearly K\"{a}hler 6-manifold.\ It is defined as a manifold with a $2$-form $\w$ and a $3$-form $P$ such that
$$d\w=3\ast_{M}P\ and\ dP=2\w\wedge\w=:4Q$$
For a local orthonormal co-frame $\{e^{a}\}$ on $M$ one can choose
$$\w=e^{12}+e^{34}+e^{56}\ and\ P=e^{135}+e^{164}-e^{236}-e^{245},$$
where $a=1,\ldots,6,$\ $e^{a_{1}\ldots a_{l}}=e^{1}\wedge\ldots e^{l},$\ and
$$\ast_{M}P=e^{145}+e^{235}+e^{136}-e^{246},\ Q=e^{1234}+e^{1256}+e^{3456}.$$
Here $\ast_{M}$ denotes the $\ast$-operator on $M$.

(2)M is a nearly parallel $G_{2}$ manifold.\ It is defined as a manifold with a $3$-form $P$ (a $G_{2}$ structure \cite{Bryant}) preserved by the $G_{2}\subset SO(7)$ such that
$$dP=\gamma\ast_{M}P$$
for some constant $\gamma\in\mathbf{R}$.\ For a local orthonormal co-frame $e^{a}$,\ $a=1,\ldots,7,$ on $M$ one can choose
$$P=e^{123}+e^{145}-e^{167}+e^{246}+e^{257}+e^{347}-e^{356}$$
and therefore
$$\ast_{M}P=:Q=e^{4567}+e^{2367}-e^{2345}+e^{1357}+e^{1346}+e^{1256}-e^{1247}.$$
It is easy to check $dP=4Q$.
\end{rem}
Constructions of solutions of the instanton equations on cylinders over nearly K\"{a}hler $6$-manifolds and nearly parallel $G_{2}$ manifold were considered in \cite{BILL,HILP,ID,ILPR}.\ In \cite{ILPR} section 4, the authors confirm that the standard Yang-Mills functional is infinite on their solutions.\ In this paper,\ we assume the instanton $\textbf{A}$ has  $L^{2}$-bounded curvature $F_{\textbf{A}}$.\ Then we have the following theorem.
\begin{thm}(Main theorem)
Let $Z=\mathbf{R}\times M$,\ here $M$ is a closed Riemannian n-manifold,\ $n\geq 4$,\ which admits a smooth $3$-form $P$ and a smooth $4$-form $Q$ satisfying equations (\ref{1.2}) and (\ref{1.3}).\ Let $\textbf{A}$ be a instanton over $Z$.\ Assume that the curvature $F_{\textbf{A}}\in L^{2}(Z)$ i.e.
$$\int_{Z}\langle F_{\textbf{A}}\wedge\ast F_{\textbf{A}}\rangle<+\infty$$
Then the instanton is a flat connection.
\end{thm}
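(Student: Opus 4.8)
The plan is to show that the instanton equation \eqref{1.4}, together with the cone-type relations \eqref{1.2}--\eqref{1.3}, forces the Yang--Mills energy on slices to satisfy a differential inequality that, combined with finiteness of the total $L^2$-energy, makes the curvature vanish. First I would decompose the connection $\textbf{A}$ and its curvature $F_{\textbf{A}}$ on $Z = \mathbf{R}\times M$ in temporal gauge, writing $F_{\textbf{A}} = dt\wedge (\pa_t A + \ldots) + F_M(t)$, where for each $t$ the object on $M$ is the curvature of a $t$-dependent connection. Plugging this into \eqref{1.4} and using $Q_Z = dt\wedge P + Q$, one separates the components along $dt$ and tangent to $M$; the key point is that the instanton equation becomes a gradient-flow-type equation $\pa_t A = -\ast_M(\ast_M F_M + \ast_M Q\wedge F_M) + (\text{terms from }P)$, i.e. a flow whose stationary points are instantons on $M$ in the appropriate sense. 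The forms $P,Q$ enter precisely so that this flow is the gradient flow of a Chern--Simons-type functional twisted by $P$.

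Next I would define $f(t) = \int_{\{t\}\times M}|F_M(t)|^2$ (or, more robustly, the full energy density on the slice) and differentiate in $t$. Using the flow equation and integration by parts on the closed manifold $M$ — here the relations $dP=4Q$ and $d\ast Q = (n-3)\ast P$ are exactly what is needed to kill boundary-type and cross terms and to produce a clean sign — I expect to obtain something like $f''(t) \geq c\, f(t)$ or a monotonicity/convexity statement for a suitable energy quantity, or alternatively a first-order identity showing the energy is monotone along $t$. Since $\int_Z |F_{\textbf{A}}|^2 = \int_{\mathbf{R}} (\text{slice energy})\,dt < \infty$, the slice energy must tend to $0$ as $t\to\pm\infty$; a convexity or monotonicity argument then forces it to be identically $0$, hence $F_{\textbf{A}}\equiv 0$.

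Concretely, the cleanest route is probably a Weitzenb\"ock/Bochner argument directly on $Z$: the instanton equation implies $F_{\textbf{A}}$ is a Yang--Mills field, and the special algebraic structure of $Q_Z$ makes $\ast Q_Z\wedge F_{\textbf{A}} = -\ast F_{\textbf{A}}$ act as a projection, so that $F_{\textbf{A}}$ lies in a definite eigenspace of the operator $\ast(Q_Z\wedge\cdot)$. Combining the Yang--Mills equation $d_{\textbf{A}}\ast F_{\textbf{A}}=0$ with the Bianchi identity $d_{\textbf{A}}F_{\textbf{A}}=0$ and the eigenspace constraint yields, after one integration by parts over $Z$, an identity of the form $\int_Z |\na_{\textbf{A}} F_{\textbf{A}}|^2 + \int_Z \langle \mathcal{R}(F_{\textbf{A}}), F_{\textbf{A}}\rangle = (\text{boundary terms at }t=\pm\infty)$, where $\mathcal{R}$ is built from the torsion $dQ_Z$ and the Riemann curvature of $Z$, and is nonnegative thanks to \eqref{1.2}--\eqref{1.3}. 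The $L^2$ hypothesis guarantees the boundary terms vanish along a sequence $t_k\to\pm\infty$ (since otherwise the integral $\int_{\mathbf{R}}$ of the slice energy would diverge), so both terms on the left vanish and $F_{\textbf{A}}=0$.

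The main obstacle I anticipate is twofold: first, justifying the vanishing of the boundary/flux terms at infinity — one only knows $\int_Z|F_{\textbf{A}}|^2<\infty$, which gives vanishing along a subsequence but requires an a priori interior estimate (elliptic regularity for Yang--Mills plus a mean-value inequality) to upgrade to genuine decay and to control $\na_{\textbf{A}}F_{\textbf{A}}$ in the boundary integrals; and second, verifying that the curvature term $\mathcal{R}$ assembled from $dP=4Q$, $d\ast Q=(n-3)\ast P$ and the product metric really is pointwise nonnegative (equivalently, that the relevant component of the Weitzenb\"ock remainder has the right sign) — this is where the precise numerical coefficients $4$ and $n-3$ must be used, and getting the algebra of the $\ast$-operator on $Z$ versus $M$ exactly right is the delicate computational heart of the argument.
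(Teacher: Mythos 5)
Your first two paragraphs are in the right spirit and overlap with the paper's setup (temporal gauge, $F_{\textbf{A}}=F_A+dt\wedge\dot A$, the flow $\ast_M\dot A=-\ast_M P\wedge F_A$ as the gradient flow of a Chern--Simons functional), but the route you actually commit to --- the Weitzenb\"ock/Bochner identity integrated over $Z$ --- has two genuine defects. First, the instanton equation does \emph{not} imply the Yang--Mills equation for the cylinder metric: applying $d_{\textbf{A}}$ to $\ast F_{\textbf{A}}+\ast Q_Z\wedge F_{\textbf{A}}=0$ and using Bianchi leaves the torsion term $d(\ast Q_Z)\wedge F_{\textbf{A}}$, and $\ast Q_Z$ is not closed on $\mathbf{R}\times M$ precisely because $d\ast_M Q=(n-3)\ast_M P\neq 0$; the paper gets a genuine Yang--Mills connection only after conformally rescaling to the cone $C(M)$ with metric $e^{2t}(dt^2+g_M)$, where $\bar\ast Q_{\bar Z}$ \emph{is} closed. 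Second, even for a true Yang--Mills field the Weitzenb\"ock formula for $F_{\textbf{A}}$ contains, besides $\nabla_{\textbf{A}}^*\nabla_{\textbf{A}}F_{\textbf{A}}$ and the Riemann curvature term, a term quadratic in $F_{\textbf{A}}$ (hence cubic after pairing with $F_{\textbf{A}}$) with no sign; your proposed identity $\int_Z|\nabla_{\textbf{A}}F_{\textbf{A}}|^2+\int_Z\langle\mathcal{R}(F_{\textbf{A}}),F_{\textbf{A}}\rangle=\text{(boundary)}$ omits it, and without a smallness hypothesis on the curvature this Bochner route does not close up. Your fallback ``monotonicity of the slice energy'' is also not carried far enough: the slice energy itself is not obviously monotone, and you never identify where $n\geq 4$ enters, which is the crux (the same scheme must fail for $n=3$, where nonflat finite-energy instantons on $\mathbf{R}\times S^3$ exist).

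What the paper actually does is: (i) derive two expressions for $\frac{d}{dT}J(T)$, where $J(T)=\int_T^\infty\|F_{\textbf{A}}\|^2_{L^2}$, namely $-2\|F_{A_T}\|^2-(n-3)CS(A_T)$ and $-2\|\dot A\|^2+(n-3)CS(A_T)$, which combine to give $\frac{d}{dT}CS(A_T)\leq 0$ and $\frac{d}{dT}CS(A_T)+2(n-3)CS(A_T)\leq 0$, hence $0\leq CS(A_T)\leq Ce^{-(2n-6)T}$ and the weighted decay $\int_T^\infty e^{\delta t}\|F_{\textbf{A}}\|^2\leq Ce^{(\delta-2n+6)T}$ for $0<\delta<2n-6$; (ii) take $\delta=n-3$ (this is exactly where $n\geq 4$ is used, since one needs $n-3<2n-6$) to conclude $F_{\textbf{A}}\in L^2$ for the \emph{cone} metric; (iii) on the cone the connection is honestly Yang--Mills, and the radial field $X=\partial_t$ is conformal with $\mathcal{L}_Xg=2g$, so Parker's identity $\mathcal{L}_X\lambda=(n-3)f\lambda+2\,Tr\bigl(d_A(i_XF_A)\wedge\ast F_A\bigr)$ for the density $\lambda=|F_A|^2dg$ (conformal weight $n-3>0$), together with a cutoff function and the Yang--Mills equation, yields $(n-3)\int\lambda=0$. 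If you want to salvage your approach, the conformal change to the cone and the positivity of the weight $n-3$ are the missing ideas; the exponential decay from the Chern--Simons differential inequality is what justifies the integrability needed to run the cutoff argument.
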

\section{Esitimation of Curvature of Yang-Mills connection with torsion}

Let $Q$ be a smooth $4$-form on $n$-dimensional manifold $X$.\ Let $A$ be an anti-self-dual instanton which satisfies the instanton equation (\ref{1.1}).\ Taking the exterior derivative of (\ref{1.1}) and using the Bianchi identity,\ we obtain
\begin{equation}\label{2.16}
d_{A}\ast{F_{A}}+\ast\mathcal{H}\wedge F_{A}=0,
\end{equation}
where the $3$-form $\mathcal{H}$  is defined by
\begin{equation}\label{2.17}
\ast\mathcal{H}=d(\ast{Q}).
\end{equation}
The second-order equation (\ref{2.16}) differs from the standard Yang-Mills equation by the last term involving a $3$-from $\mathcal{H}$.\ This torsion term naturally appears in string-theory compactifications with fluxes \cite{BKLS,DK,Gr}.\ For the case $d(\ast{Q})=0$,\ the torsion term vanishes and the instanton equation (\ref{1.1}) imply the Yang-Mills equation.\ The latter also holds true when the instanton solution $A$ satisfies $d(\ast{Q})\wedge F_{A}=0$ as well,\ like the cases,\ on nearly K\"{a}hler $6$-manifolds,\ nearly parallel $G_{2}$-manifolds and Sasakian manifolds \cite{HN}.

In section 4.2 of \cite{BILL} or in section 2.1 of \cite{HILP},\ they online that in the instanton does not extremize the standard Yang-Mills functional in the torsionful case.\ Instead,\ they add a add a Chern-Simons-type term to get the following functional:
\begin{equation}\label{2.18}
S(A)=-\int_{X}Tr\big{(}F_{A}\wedge\ast F_{A}+F_{A}\wedge F_{A}\wedge\ast{Q}\big{)},
\end{equation}
This is the right functional which produces the correct Yang-Mills equation with torsion.\ And the instanton equations (\ref{1.1}) can be derived from this action using a  Bogomolny argument.\ In the case of a closed form $\ast{Q}$,\ the second term in (\ref{2.18}) is topological invariant and the torsion (\ref{2.17}) disappears from (\ref{2.16}).

In this section,\ we will derive monotonicity formula for Yang-Mills connection with torsion (\ref{2.16}).\ Its proof follows Tian's arguments about pure Yang-Mills connection in \cite{G.Tian} with some modifications.

Let $X$ be a compact Riemannian $n$-manifold with metric $g$ and $E$ is a vector bundle over $X$ with compact structure group $G$.\ For any connection $A$ of $E$,\ its curvature form $F_{A}$ takes value in $Lie(G)$.\ The norm of $F_{A}$ at any $p\in X$ is given by
$$|F_{A}|^{2}=\sum_{i,j=1}^{n}\langle F_{A}(e_{i},e_{j}),F_{A}(e_{i},e_{j})\rangle,$$
where $\{e_{i}\}$ is any orthonormal basis of $T_{p}X$,\ and $\langle\cdot,\cdot\rangle$ is the Killing form of $Lie(G)$.

As in \cite{G.Tian},\ we consider a one-parameter family of diffeomorphisms $\{\psi_{t}\}_{|t|<\infty}$ of $X$ with $\psi_{0}=id_{X}$.\ We fix a connection $A_{0}$,\ and denote by its derivative $D$.\ Then for any connection $A$,\ we can define a one-parameter family $\{A_{t}\}$ in the following way.\ Let $\tau_{t}^{0}$ be the parallel transport on $E$ associated to $A_{0}$ along the path $\psi_{s}(x)_{0<s<t}$,\ where $x\in X$.\ More precisely,\ for any $u\in E_{x}$ over $x\in X$,\ let $\tau_{s}^{0}(u)$ be the section of $E$ over the path $\psi_{s}(x)_{0<s<t}$ such that
$$D_{\frac{\pa}{\pa s}}\tau_{s}^{0}(u)=0,\ \tau_{0}^{0}(u)=u.$$
We define a family of connections $A_{t}:=\psi_{t}^{\ast}(A)$ by defining its covariant derivative as
$$D_{\nu}^{t}s=(\tau^{0}_{t})^{-1}\big{(}D_{d\psi_{t}(\nu)}(\tau^{0}_{t}(s))\big{)}$$
for any $\nu\in TX,\ s\in\Gamma(X,E)$.\ Then the curvature of $A_{t}$ is written as
$$F_{A_{t}}(X_{1},X_{2})=(\tau_{t}^{0})^{-1}\cdot F_{A}(d\psi_{t}(X_{1}),d\psi_{t}(X_{2}))\cdot\tau_{t}^{0}.$$
It follows that
$$\int_{X}|F_{A}|^{2}=\int_{X}\sum_{i,j=1}^{n}|F_{A}(d\psi_{t}(e_{i}),d\psi_{t}(e_{j}))|^{2}(\psi_{t}(x))dV_{g}.$$
where $dV_{g}$ denotes the volume form of $g$,\ and $\{e_{i}\}$ is any local orthonormal basis of $TX$.\ By changing variables,\ we obtain
$$\int_{X}|F_{A}|^{2}
=\int_{X}\sum_{i,j=1}^{n}|F_{A}(d\psi_{t}(e_{i}(\psi_{t}^{-1}(x))),d\psi_{t}(e_{j}(\psi_{t}^{-1}(x)))|^{2}Jac(\psi_{t}^{-1})dV_{g}.$$
Let $\nu$ be the vector field $\frac{\pa\psi_{t}}{\pa t}|_{t=0}$ on $X$.\ Then we deduce from the above that
\begin{equation}\label{2.5}
\begin{split}
\frac{d}{dt}YM(A_{t})|_{t=0}&=\int_{X}\langle i_{\nu}F_{A},d_{A}^{\ast}F_{A}\rangle\\
&=\int_{X}Tr\big{(}i_{\nu}F_{A}\wedge F_{A}\wedge (d\ast Q)\big{)}\\
&=\int_{X}\big{(}|F_{A}|^{2}div\nu+4\sum_{i,j=1}^{n}\langle F_{A}([\nu,e_{i}],e_{j}),F_{A}(e_{i},e_{j})\rangle\big{)}dV_{g}\\
&=\int_{X}\big{(}|F_{A}|^{2}div\nu-4\sum_{i,j=1}^{n}\langle F_{A}(\na_{e_{i}}\nu,e_{j}),F_{A}(e_{i},e_{j})\rangle\big{)}dV_{g}\\
\end{split}
\end{equation}
Fix any $p\in X$,\ let $r_{p}$ be a positive number with following properties:\ there are normal coordinates $x_{1},\ldots,x_{n}$ in the geodesic ball $B_{r_{p}}(p)$ of $(X,g)$,\ such that $p=(0,\ldots,0)$ and for some constant $c(p)$,
$$|g_{ij}-\de_{ij}|\leq c(p)(|x_{1}|^{2}+\ldots+|x_{n}|^{2}),$$
$$|dg_{ij}|\leq c(p)\sqrt{|x_{1}|^{2}+\ldots+|x_{n}|^{2}},$$
where $$g_{ij}=g\big{(}\frac{\pa}{\pa x_{i}},\frac{\pa}{\pa x_{j}}\big{)}.$$
Let $r(x):=\sqrt{x_{1}^{2}+\ldots+x_{n}^{2}}$ be the distance function from $p$.\ Define $\nu(x)=\xi(r)r\frac{\pa}{\pa r}$,\ where $\xi$ is some smooth function with compact support in $B_{r_{p}}(p)$.\ Let $\{e_{1},\ldots,e_{n}\}$ be any orthonormal basis near $p$ such that $e_{1}=\frac{\pa}{\pa r}$.\ Since $x_{1},\ldots,x_{n}$ are normal coordinates,\ we have $\na_{\frac{\pa}{\pa r}}\frac{\pa}{\pa r}=0$.\ It follows that
\begin{equation}\label{2.6}
\na_{\frac{\pa}{\pa r}}\nu=(\xi r)'\frac{\pa}{\pa r}=(\xi'r+\xi)\frac{\pa}{\pa r}.
\end{equation}
Moreover,\ for $i\geq2$,
\begin{equation}\label{2.7}
\na_{e_{i}}\nu=\xi r(\na_{e_{i}}\frac{\pa}{\pa r})=\xi\sum_{j=1}^{n}b_{ij}e_{j}.
\end{equation}
where $|b_{ij}-\de_{ij}|=O(1)c(p)r^{2}$.\ Applying (\ref{2.6}) and (\ref{2.7}) to the variation formula (\ref{2.5}),\ we obtain
\begin{equation}\label{2.8}
\begin{split}
\int_{X}\langle i_{\nu}F_{A},d_{A}^{\ast}F_{A}\rangle
&=\int_{X}\big{(}|F_{A}|^{2}(\xi'r+(n-4)\xi+O(1)c(p)r^{2}\xi)dV_{g}\\
&-4\int_{X}(\xi'r|\frac{\pa}{\pa r}\lrcorner F_{A}|^{2})dV_{g}.\\
\end{split}
\end{equation}
where $\frac{\pa}{\pa r}\lrcorner F_{A}=F_{A}(\frac{\pa}{\pa r},\cdot)$.

We choose,\ for any $\tau$ small enough.\ $\xi(r)=\xi_{\tau}(r)=\eta(\frac{r}{\tau})$,\ where $\eta$ is smooth and satisfies:\ $\eta(r)=1$ for $r\in[0,1]$,\ $\eta(r)=0$ for $r\in[1+\epsilon,\infty)$, $\epsilon<0$ and $\eta'(r)\leq0$.\ Then
\begin{equation}\label{2.9}
\tau\frac{\pa}{\pa\tau}(\xi_{\tau}(r))=-r\xi'_{\tau}(r).
\end{equation}
Plugging this into (\ref{2.8}),\ we obtain
\begin{equation}
\begin{split}
\int_{X}\langle i_{\nu}F_{A},d_{A}^{\ast}F_{A}\rangle
&=\tau\frac{\pa}{\pa\tau}\big{(}\int_{X}\xi_{\tau}|F_{A}|^{2}dV_{g}\big{)}\\
&+\big{(}(4-n)+O(1)c(p)\tau^{2}\big{)}\int_{X}\xi_{\tau}|F_{A}|^{2}dV_{g}\\
&-4\tau\frac{\pa}{\pa\tau}\big{(}\int_{X}\xi_{\tau}|\frac{\pa}{\pa r}\lrcorner F_{A}|^{2}dV_{g}\big{)}\\
\end{split}
\end{equation}
Choose a nonngeative number $a\geq O(1)c(p)r_{p}+\max_{x\in X}|d(\ast{Q})|(x)$.\ Then we deduce from the above
\begin{equation}
\begin{split}
&\quad\frac{\pa}{\pa\tau}\big{(}\tau^{4-n}e^{a\tau}\int_{X}\xi_{\tau}|F_{A}|^{2}dV_{g}{)}\\
&=\tau^{4-n}e^{a\tau}\bigg{(}4\frac{\pa}{\pa\tau}\big{(}\int_{X}\xi_{\tau}|\frac{\pa}{\pa r}\lrcorner F_{A}|^{2}dV_{g}\big{)}+(-O(1)c(p)\tau+a)\int_{X}\xi_{\tau}|F_{A}|^{2}dV_{g}\bigg{)}\\
&-e^{a\tau}\tau^{3-n}\int_{X}Tr(i_{\nu}F_{A}\wedge F_{A}\wedge d(\ast Q))\\
\end{split}
\end{equation}
We have the fact:
\begin{equation}
\begin{split}
|\int_{X}Tr(i_{\nu}F_{A}\wedge F_{A}\wedge d(\ast Q))|&\leq\int_{X}|\nu|\cdot|F_{A}|^{2}\cdot\max_{x\in X}|d(\ast{Q})|\\
&\leq\max_{x\in X}|d(\ast{Q})|\int_{B_{\tau(1+\epsilon)}(p)}\tau(1+\epsilon)|F_{A}|^{2}dV_{g}\\
\end{split}
\end{equation}
Then,\ by integrating on $\tau$ and letting $\epsilon$ tend to zero,\ we have already proved:
\begin{thm}\label{2.12}
Let $r_{p},\ c(p)$ and $a$ be as above.\ Then for any $0<\sigma<\rho<r_{p}$,\ we have
\begin{equation}
\begin{split}
&\quad\rho^{4-n}e^{a\rho}\int_{B_{\rho}(p)}|F_{A}|^{2}dV_{g}-\sigma^{4-n}e^{a\sigma}\int_{B_{\sigma}(p)}|F_{A}|^{2}dV_{g}\\
&\geq4\int_{B_{\rho}(p)\backslash B_{\sigma}(p)}r^{4-n}e^{ar}|\frac{\pa}{\pa r}\lrcorner F_{A}|^{2}dV_{g}\\
&+\int_{\sigma}^{\rho}\big{(}e^{a\tau}\tau^{4-n}(a-\max_{x\in X}|d(\ast{Q})|-O(1)c_{p}\tau)\int_{B_{\tau}(p)}|F_{A}|^{2}dV_{g}\big{)}d\tau\\
\end{split}
\end{equation}
\end{thm}
Next,\ we prove a mean value inequality about the energy dense $|F_{A}|^{2}$.\ The Bochner-Weitzenb\"{o}ck formula (\cite{BL}, Theorem 3.1) is
$$(d_{A}d_{A}^{\ast}+d_{A}^{\ast}d_{A})F_{A}=\na^{\ast}_{A}\na_{A}F_{A}
+F_{A}\circ(Ric\wedge g+2R)+\mathcal{R}^{A}(F_{A}).$$
Since $A$ is a instanton and the Bianchi identity $d_{A}F_{A}=0$,\ we re-write the left hand
\begin{equation}\nonumber
(d_{A}d_{A}^{\ast}+d_{A}^{\ast}d_{A})F_{A}=d_{A}\big{(}\ast(d(\ast Q)\wedge F_{A})\big{)}
\end{equation}
Hence
$$|d_{A}\big{(}\ast(d(\ast Q)\wedge F_{A})\big{)}|\leq n|\na(d(\ast Q))|F_{A}|+n|d(\ast Q)||\na_{A}F_{A}|$$
due to for all $X_{1},X_{2},\ldots,X_{n-2}\in T_{x}X$,\ where $n=dimX$,
\begin{equation}\nonumber
\begin{split}
&\quad d_{A}^{\ast}\big{(}(d(\ast Q)\wedge F_{A})\big{)}(X_{1},X_{2},\ldots,X_{n-2})\\
&=-\sum_{i}(\na_{A})_{e_{i}}(d(\ast Q)\wedge F_{A})(e_{i},X_{1},\ldots,X_{n-2}),\\
&=-\sum_{i}\big{(}\na_{e_{i}}(d(\ast Q))\wedge F_{A}\big{)}(e_{i},X_{1},\ldots,X_{n-2})\\
&\quad-\sum_{i}\big{(}(d(\ast Q))\wedge (\na_{A})_{e_{i}}F_{A}\big{)}(e_{i},X_{1},\ldots,X_{n-2}),\\
\end{split}
\end{equation}
Then we have
\begin{equation}
\begin{split}
|\langle(d_{A}d_{A}^{\ast}+d_{A}^{\ast}d_{A})F_{A},F_{A}\rangle|
&\leq n|\na(d(\ast Q))|\cdot|F_{A}|^{2}+n|d(\ast Q)|\cdot|\na_{A}F_{A}|\cdot|F_{A}|\\
&\leq C_{1}|F_{A}|^{2}+C_{2}(\varepsilon|\na_{A}F_{A}|^{2}+\frac{1}{\varepsilon}|F_{A}|^{2})\\
\end{split}
\end{equation}
where $\varepsilon$ is a positive constant.

The quadratic $\mathcal{R}^{A}(F_{A})\in\Om^{2}(\mathfrak{g})$ can be expressed with the help of a local orthonormal frame $(e_{1},e_{2},\ldots,e_{n})$ of $TX$ as
$$\mathcal{R}^{A}(F_{A})(X_{1},X_{2})=2\sum_{j=1}^{n}[F_{A}(e_{j},X_{1}),F_{A}(e_{j},X_{2})].$$
The estimate of the Laplacian now follow from
\begin{equation}\nonumber
\begin{split}
-\na^{\ast}\na|F_{A}|^{2}
&=-2|\na_{A}F_{A}|^{2}-2\langle F_{A},\na_{A}^{\ast}\na_{A}F_{A}\rangle\\
&\leq 2\langle F_{A},F_{A}\circ(Ric\wedge g+2R)\rangle+2\langle F_{A},\mathcal{R}^{A}(F_{A})\rangle\\
&+C_{1}|F_{A}|^{2}+C_{2}(\varepsilon|\na_{A}F_{A}|^{2}
+\frac{1}{\varepsilon}|F_{A}|^{2})-2|\na_{A}F_{A}|^{2}\\
\end{split}
\end{equation}
We choose $\varepsilon$ small enough such that $C_{2}\varepsilon<2$,\ then we have
\begin{equation}\nonumber
\De|F_{A}|^{2}\leq C|F_{A}|^{2}+c|F_{A}|^{3}.
\end{equation}
Thus,\ we get
\begin{equation}\label{2.11}
\De|F_{A}|\leq C|F_{A}|+c|F_{A}|^{2}.
\end{equation}
\begin{thm}\label{2.19}
Let $A$ be any Yang-Mills connection with torsion of a $G$-bundle $E$ over $X$.\ Then there exist constants $\varepsilon=\varepsilon(X,n,Q)>0$ and $C=C(X,n)$,\ such that for any $p\in X$ and $p<r_{p}$,\ whenever
$$\rho^{4-n}\int_{B_{\rho}(p)}|F_{A}|^{2}dV_{g}\leq\varepsilon$$
then
\begin{equation}\label{2.15}
|F_{A}|(p)\leq\frac{C}{\rho^{2}}\big{(}\rho^{4-n}\int_{B_{\rho}(p)}|F_{A}|^{2}dV_{g}\big{)}^{\frac{1}{2}}.
\end{equation}
\end{thm}
Our proof here use G.Tian's arguments in \cite{G.Tian} for pure Yang-Mills connection.
\begin{proof}
By scalling,\ we may assume that $\rho=1$.\ Define a function
$$f(r)=(1-2r)^{2}\sup_{x\in B_{r}(p)}|F_{A}|(x),\ r\in[0,\frac{1}{2}].$$
Then $f(r)$ is continuous in $[0,\frac{1}{2}]$ with $f(\frac{1}{2})=0$,\ and $f$ attains its maximum at a certain $r_{0}$ in $[0,\frac{1}{2}]$.

First we claim that $f(r_{0})\leq64$ if $\varepsilon$ is sufficiently small.\ Assume that $f(r_{0})>64$.\ Put $b=\sup_{x\in B_{r_{0}}(p)}|F_{A}|(x)=|F_{A}|(x_{0})$ by taking $\sigma=\frac{1}{4}(1-2r_{0})$,\ we get
\begin{equation}
\begin{split}
\sup_{x\in B_{\sigma}(x_{0})}|F_{A}|&\leq\sup_{x\in B_{r_{0}+\sigma}(p)}|F_{A}|(x)\\
&\leq\frac{(1-2r_{0})^{2}}{(1-2r_{0}-2\sigma)^{2}}\sup_{x\in B_{r_{0}}(p)}|F_{A}|(x)=4b.\\
\end{split}
\end{equation}
Clearly,\ $16\sigma^{2}b\geq64$;\ i.e.,\ $\sigma\sqrt{b}\geq2$.\ Define a scaled metric $\widetilde{g}=bg$.\ Then the norm $|F_{A}|_{\widetilde{g}}$ of $F_{A}$ is equal to $b^{-1}F_{A}$ with respect to $\widetilde{g}$.\ Hence
\begin{equation}\label{2.10}
\sup_{x\in B_{2}(x_{0},\widetilde{g})}|F_{A}|_{\widetilde{g}}\leq4,
\end{equation}
where $B_{2}(x_{0},\widetilde{g})$ denotes the geodesic ball of $\widetilde{g}$ with radius $2$ and centered at $x_{0}$.
Using (\ref{2.10}),\ we deduce from (\ref{2.11}) that in $B_{2}(x_{0},\widetilde{g})$,
\begin{equation}
\De_{\widetilde{g}}|F_{A}|_{\widetilde{g}}\leq(C+4c)|F_{A}|_{\widetilde{g}}.
\end{equation}
Then,\ by using the mean-value theorem,\ we obtain
\begin{equation}\label{2.13}
1=|F_{A}|_{\widetilde{g}}(x_{0})\leq\widetilde{c}\big{(}\int_{B_{1}(x,\widetilde{g})}|F_{A}|_{\widetilde{g}}^{2}dV_{\widetilde{g}}\big{)}^{\frac{1}{2}}.
\end{equation}
where $\widetilde{c}$ is some uniform constant.

However,\ by the monotonicity (Theorem \ref{2.12}),
\begin{equation}\nonumber
\begin{split}
\int_{B_{1}(x_{0},\widetilde{g})}|F_{A}|^{2}_{\widetilde{g}}dV_{\widetilde{g}}&=(\sqrt{b})^{n-4}\int_{B_{\frac{1}{\sqrt{b}}}(x_{0})}|F_{A}|^{2}dV_{g}\\
&\leq(\frac{1}{2})^{4-n}e^{\frac{a}{2}}\int_{B_{\frac{1}{2}}(x_{0})}|F_{A}|^{2}dV_{g}\\
&\leq\varepsilon2^{n-4}e^{\frac{a}{2}}\\
\end{split}
\end{equation}
Combining this with (\ref{2.13}),\ we obtain
$$1\leq\widetilde{c}\varepsilon2^{n-4}e^{\frac{a}{2}}.$$
It is impossible since we can choose  $\varepsilon=\varepsilon(X,n,Q)$ sufficiently small.\ The claim is proved.

Thus,\ we have
$$\sup_{x\in B_{\frac{1}{4}}(p)}|F_{A}|(x)\leq4f(r_{0})\leq256.$$
It follows from this and (\ref{2.11}) with $\widetilde{g}$ replaced by $g$ that for some uniform constant $c'$,
\begin{equation}\label{2.14}
\De_{g}|F_{A}|\leq c'|F_{A}|.
\end{equation}
Then (\ref{2.15}) follows from (\ref{2.14}) and a standard Moser iteration.
\end{proof}

\section{Asymptotic Behavior and Conformal Transformation}

\subsection{Chern-Simons Functional}

The main aim of this section is to get the relationship between gauge theory on an $n$-dimensional manifold $M$ and the gauge theory on the $n+1$-dimensional manifold $Z=\mathbf{R}\times{M}$.\ The main idea is that a connection on $\mathbf{R}\times{M}$ can be regard as one-parameter families of connections on $M$ by local trivialisation.\ Let $t$ be the standard parameter on the factor $\mathbf{R}$ in the $\mathbf{R}\times{M}$ and let $\{x^{j}\}_{j=1}^{n}$ be local coordinates of $M$.\ A connection $\textbf{A}$ over the cylinder $Z$ is given by a local connection matrix
$$\textbf{A}=A_{0}dt+\sum_{i=1}^{n}A_{i}dx^{i}.$$
where $A_{0}$ and $A_{i}$ dependence on all $n+1$ variable $t,x^{1},\ldots,x^{n}$.\ We take $A_{0}=0$ (sometimes called a temporal gauge).\ In this situation, the curvature in a mixed $x_{i}$-plane is given by the simple formula
$$F_{0i}=\frac{\pa A_{i}}{\pa t}.$$
We denote $A=\sum_{i=1}^{n}A_{i}dx^{i}$ and $\dot{A}=\frac{\pa A}{\pa t}$,\ then the curvature is given by
$$F_{\textbf{A}}=F_{A}+dt\wedge\dot{A}.$$
$M$ has a Riemannian metric and $\ast$-operator $\ast_{M}$.\ If $\phi$ is a $1$-form on $M$ then,\ for $\ast$-operator defined on $Z=\mathbf{R}\times M$ with respect to the product metric,\ we have
$$\ast(dt\wedge\phi)=\ast_{M}\phi.$$
Then the instanton equation is equivalent to
\begin{equation}\label{2.1}
\ast_{M}\dot{A}=-\ast_{M}P\wedge F_{A},
\end{equation}
\begin{equation}
\ast_{M}F_{A}=-\dot{A}\wedge\ast_{M}P-\ast_{M}Q\wedge F_{A}.
\end{equation}
Let $E\rightarrow M$ be a vector bundle,\ the space $\mathcal{A}$ is an affine space modelled on $\Om^{1}(\mathfrak{g}_{E})$ so,\ fixing a reference connection $A_{0}\in\mathcal{A}$,\ we have
$$\mathcal{A}=A_{0}+\Om^{1}(\mathfrak{g}_{E}).$$\ We define the Chern-Simons functional by
$$CS(A):=-\int_{M}Tr\big{(}a\wedge{d_{A_{0}}a}+\frac{2}{3}a\wedge a\wedge a\big{)}\wedge\ast_{M}P,$$
fixing $CS(A_{0})=0$.\ This functional is obtained by integrating of the Chern-Simons $1$-form
$$\Gamma(\b)_{A}=\Gamma_{A}(\b_{A})=-2\int_{M}Tr(F_{A}\wedge\b_{A})\wedge\ast_{M}P.$$
We find $CS$ explicitly by integrating $\Ga$ over paths $A(t)=A_{0}+ta$,\ from $A_{0}$ to any $A=A_{0}+a$:
\begin{equation}\nonumber
\begin{split}
CS(A)-CS(A_{0})&=\int_{0}^{1}\Gamma_{A(t)}\big{(}\dot{A}(t)\big{)}dt\\
&=-2\int_{0}^{1}\Big{(}\int_{M}Tr\big{(}(F_{A_{0}}+td_{A_{0}}a+t^{2}a\wedge a)\wedge a\big{)}\wedge\ast_{M} P\Big{)}dt\\
&=-\int_{M}Tr\big{(}d_{A_{0}}a\wedge a+\frac{2}{3}a\wedge a\wedge a\big{)}\wedge\ast_{M}P+C,\\
\end{split}
\end{equation}
where $C=C(A_{0},a)$ is a constant and vanishes if $A_{0}$ is an instanton.\ The co-closed condition $d\ast_{M}{P}=0$ implies that the Chern-Simons $1$-form is closed.\ So it does not depend on the path $A(t)$ \cite{Donaldson,Earp,Earp2}.
Since
$$dTr(d_{A_{0}}a\wedge a+\frac{2}{3}a\wedge a\wedge a)=Tr(F^{2}_{A_{0}+a}-F^{2}_{A_{0}}),$$
we can re-write Chern-Simons functional as
\begin{equation}\label{2.3}
\begin{split}
CS(A)-CS(A_{0})&=-\int_{M}Tr\big{(}d_{A_{0}}a\wedge{a}+\frac{2}{3}a\wedge a\wedge a\big{)}\wedge\ast_{M}P\\
&=-\frac{1}{n-3}\int_{M}Tr(F^{2}_{A}-F^{2}_{A_{0}})\wedge\ast_{M}Q,\\
\end{split}
\end{equation}
the second formula holds because of equation (\ref{1.3}).

\subsection{Asymptotic Behavior}

Let $Z=\mathbf{R}\times{M}$ be an $(n+1)$-manifold and $M$ be an $n$-manifold.\ Let $\textbf{A}$ be an instanton on $Z$ with finite energy,\ i.e.\ $\int_{Z}|F_{\textbf{A}}|^{2}<\infty$.\ We use the Chern-Simons functional to study the decay of instantons over the cylinder manifold.\ We will see that,\ an instanton with $L^{2}(Z)$-bounded curvature can be represented by a connection form which decays exponentially on the tube.

We consider a family of bands $B_{T}=(T-1,T)\times M$ which we identify with the model $B=(0,1)\times M$ by translation.\ So the integrability of $|F_{\textbf{A}}|^{2}$ over the end implies that
$$\int_{(T,T+1)\times M}|F_{\textbf{A}}|^{2}\rightarrow0\quad as\quad T\rightarrow\infty.$$
\begin{prop}\label{3.4}
Let $Z=\mathbf{R}\times M$,\ here $M$ is a closed Riemannian $n$-manifold,\ $n\geq 4$,\ which admits a smooth $3$-form $P$ and a smooth $4$-form $Q$ those satisfy equations (\ref{1.2}) and (\ref{1.3}).\ Let $\textbf{A}$ be a instanton over $Z$,\ then at the end of $Z$ there is a flat connection $\Gamma$ over $M$ such that $\textbf{A}$ converges to $\Gamma$,\ i.e.\ the restriction $\textbf{A}|_{M\times\{T\}}$ converges (modulo gauge equivalence) in $C^{\infty}$ over $M$ as $T\rightarrow\infty$.
\end{prop}
\begin{proof}
We choose $$\rho=\frac{1}{2}Inj\big{(}(T,T+1)\times M,g_{Z}\big{)},$$
where $Inj\big{(}(t,t+1)\times M\big{)}>0$ denotes the injectivity radius of the manifold
$\big{(}(T,T+1)\times M,g_{Z}\big{)}$.\ It's easy to see $\rho$ is not dependent on $t$.
Since $\ast{Q_{Z}}=\ast_{M}P+dt\wedge\ast_{M}Q$,\ we obtain
$$\max_{(x,t)\in Z}|d(\ast{Q_{Z}})|^{2}=\max_{x\in M}\big{(}|d\ast_{M}P|^{2}+|d(\ast_{M}{Q})|^{2}\big{)}<\infty,$$
and
$$\max_{(x,t)\in Z}|\na(d\ast{Q_{Z}})|\leq\max_{x\in M}|\na(d\ast_{M}P)|+\max_{x\in M}|\na(d\ast_{M}{Q})|<\infty.$$
We denote $\varepsilon=\varepsilon(Z,n,Q)$ as the constant in Theorem \ref{2.19}.\ Then there exist $T$ sufficiently large such that $t\geq T$,\ we have
$$\int_{(T,T+1)\times M}|F_{\textbf{A}}|^{2}\leq\varepsilon\rho^{n-3},$$
Then for any point $(t,x)\in (T,T+1)\times M$,\ we have
$$\rho^{3-n}\int_{B_{\rho}(x,t)}|F_{\textbf{A}}|^{2}\leq\varepsilon.$$
From Theorem \ref{2.19},\ we have
$$|F_{\textbf{A}}|(t,x)\leq \frac{C}{\rho^{2}}\big{(}\rho^{3-n}\int_{B_{\rho}(x,t)}|F_{\textbf{A}}|^{2}\big{)}^{\frac{1}{2}}$$
It implies that for any sequence $T_{i}\rightarrow\infty$ there exist a flat connection $\Ga$ over $M$ such that,\ after suitable gauge transformations,
$$\textbf{A}_{T_{i}}\rightarrow\Ga,$$
in $C^{\infty}$ over $M$.
\end{proof}
Under above,\ from (\ref{2.3}) we can write Chern-Simons function as
$$CS(A(T))-CS(A(\infty))=-\frac{1}{n-3}\int_{M}Tr(F_{A}\wedge F_{A})\wedge\ast_{M}Q.$$
\begin{lem}Let $\textbf{A}$ be an instanton with temporal gauge,\ then
\begin{equation}\label{2.2}
\begin{split}
CS(A(T'))-CS(A(T))&=\int_{[T,T']\times{M}}Tr(F_{\textbf{A}}\wedge\ast{F}_{\textbf{A}})\\
&-(n-3)\int_{T}^{T'}\big{(}CS(A(t))-CS(A_{\infty})\big{)}dt\\
\end{split}
\end{equation}
\end{lem}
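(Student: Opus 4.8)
The plan is to derive the identity in (\ref{2.2}) by differentiating the Chern-Simons function along the trajectory $t\mapsto A(t)=\textbf{A}|_{M\times\{t\}}$ in temporal gauge and then integrating from $T'$ to $T$. First I would compute $\frac{d}{dt}CS(A(t))$. Using the first expression for $CS$ in (\ref{2.3}) together with the fact that the Chern-Simons $1$-form $\Gamma_A$ represents its differential, one gets $\frac{d}{dt}CS(A(t)) = 2\int_{M}\mathrm{Tr}(F_{A(t)}\wedge\dot A(t))\wedge\ast_M P$. The point is now to rewrite this using the instanton equations. From equation (\ref{2.1}) we have $\ast_M\dot A = -\ast_M P\wedge F_A$, equivalently $\dot A\wedge\ast_M P$ is (up to sign) $\ast_M\dot A$ paired against nothing; more usefully, $\int_M\mathrm{Tr}(F_A\wedge\dot A)\wedge\ast_M P = -\int_M\mathrm{Tr}(F_A\wedge\ast_M\ast_M P\wedge F_A)$-type manipulation — the honest route is to substitute $\ast_M P\wedge F_A = -\ast_M\dot A$ so that $2\int_M\mathrm{Tr}(F_A\wedge\dot A)\wedge\ast_M P = -2\int_M\mathrm{Tr}(\dot A\wedge\ast_M\dot A)$ after moving the wedge factor around, which is $-2\|\dot A\|^2_{L^2(M)}$.

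Next I would express the full curvature density $F_{\textbf{A}}\wedge\ast F_{\textbf{A}}$ on the slab $[T',T]\times M$ in terms of $M$-data. Since $F_{\textbf{A}} = F_A + dt\wedge\dot A$ and $\ast(dt\wedge\phi) = \ast_M\phi$, one finds $\ast F_{\textbf{A}} = dt\wedge\ast_M F_A \pm \ast_M\dot A$, hence the $(n+1)$-form $F_{\textbf{A}}\wedge\ast F_{\textbf{A}}$ integrates over each slice to $\big(|\dot A|^2 + |F_A|^2\big)$; so $\int_{[T',T]\times M}F_{\textbf{A}}\wedge\ast F_{\textbf{A}} = \int_{T'}^{T}\big(\|\dot A\|^2_{L^2(M)} + \|F_A\|^2_{L^2(M)}\big)\,dt$. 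Then I would use the second instanton equation $\ast_M F_A = -\dot A\wedge\ast_M P - \ast_M Q\wedge F_A$ to relate $\|F_A\|^2_{L^2(M)}$ to $\|\dot A\|^2$ and to the term $\int_M\mathrm{Tr}(F_A\wedge F_A)\wedge\ast_M Q$, which by the second line of (\ref{2.3}) equals $(n-3)CS(A(t))$. Assembling: $\frac{d}{dt}CS(A(t))$ will come out equal to $-\big(\|\dot A\|^2 + \|F_A\|^2\big)_{L^2(M)} - (n-3)CS(A(t))$, i.e. minus the slice-density of $F_{\textbf{A}}\wedge\ast F_{\textbf{A}}$ minus $(n-3)CS(A(t))$. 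Integrating $t$ from $T'$ to $T$ yields exactly (\ref{2.2}).

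The step I expect to be the main obstacle is the bookkeeping of Hodge-star signs and the wedge-order conventions when passing between the $(n+1)$-dimensional $\ast$ on $Z$ and $\ast_M$ on $M$, and in particular verifying that the cross terms (the ones mixing $dt\wedge\dot A$ with $F_A$ under the wedge with $\ast F_{\textbf{A}}$) vanish or reorganize correctly, so that the right-hand side collects cleanly into the stated two terms with the correct overall sign. A secondary subtlety is making sure the identity $\frac{d}{dt}CS(A(t)) = 2\int_M\mathrm{Tr}(F_{A(t)}\wedge\dot A(t))\wedge\ast_M P$ is legitimate here, which relies on the closedness of the Chern-Simons $1$-form (equivalently $d\ast_M P = 0$, noted at the end of Section 2) so that the derivative of $CS$ along any path is computed by the $1$-form $\Gamma$; with that in hand the computation is a direct substitution of the two instanton equations (\ref{2.1}) and the displayed second equation. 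I would finish by double-checking the $n=4$ versus $n>4$ compatibility of the coefficient $(n-3)$ against (\ref{1.3}) and (\ref{2.3}), since that factor is what carries through to the final bound.
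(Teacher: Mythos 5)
Your proposal is correct and lands on the right identity, but it organizes the computation differently from the paper. Both arguments begin the same way, with $\frac{d}{dt}CS\big(A(t)\big)=\Gamma_{A(t)}\big(\dot{A}(t)\big)=2\int_{M}Tr\big(F_{A(t)}\wedge\dot{A}(t)\big)\wedge\ast_{M}P$, justified by the closedness of the Chern--Simons $1$-form. The paper then stays at the level of the $4$-form density: it rewrites $2\,Tr(F_{A}\wedge dt\wedge\dot{A})\wedge\ast_{M}P$ as $Tr(F_{\textbf{A}}\wedge F_{\textbf{A}})\wedge\ast Q_{Z}$ minus the term $dt\wedge Tr(F_{A}\wedge F_{A})\wedge\ast_{M}Q$ (using $\ast Q_{Z}=\ast_{M}P+dt\wedge\ast_{M}Q$ and the fact that the remaining cross terms vanish for degree reasons), and then applies the full instanton equation (\ref{1.4}) once to convert $Tr(F_{\textbf{A}}\wedge F_{\textbf{A}})\wedge\ast Q_{Z}$ into $-F_{\textbf{A}}\wedge\ast F_{\textbf{A}}$. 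You instead descend to the component equations: (\ref{2.1}) gives $\frac{d}{dt}CS=-2\|\dot{A}\|^{2}_{L^{2}(M)}$, the slice decomposition gives $\int_{M\times\{t\}}F_{\textbf{A}}\wedge\ast F_{\textbf{A}}=\|\dot{A}\|^{2}+\|F_{A}\|^{2}$, and the second component equation yields $\|F_{A}\|^{2}=\|\dot{A}\|^{2}-(n-3)CS$; assembling these three facts reproduces (\ref{2.2}). What your route buys is that it proves, inside the lemma, exactly the energy identity $\|F_{A_{T}}\|^{2}_{L^{2}(M)}=\|\dot{A}\|^{2}_{L^{2}(M)}-(n-3)CS(A_{T})$ that the paper later asserts without proof to derive (\ref{3.3}) and (\ref{3.6}), and it makes the sign of $\frac{d}{dT}CS(A_{T})\leq 0$ transparent; what the paper's route buys is that it needs only one application of the instanton equation and avoids the slice-by-slice Hodge-star bookkeeping that you correctly flag as the delicate point (the trace/inner-product sign conventions there are exactly where a careless computation flips a sign). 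The only caution is that your intermediate manipulations of $\ast_{M}P\wedge F_{A}=-\ast_{M}\dot{A}$ are stated loosely; the final arithmetic is consistent, but you should fix a convention for $\langle\alpha,\beta\rangle\,dvol=-Tr(\alpha\wedge\ast_{M}\beta)$ and carry it through each wedge reordering before declaring the signs settled.
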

\begin{proof}
Using the method of previous section,\ we have
$$\frac{d}{dt}CS\big{(}A(t)\big{)}=\Gamma_{A(t)}(\dot{A(t)})$$
Then
\begin{equation}\nonumber
\begin{split}
&\quad CS(A(T'))-CS(A(T))=\int_{T}^{T'}{dCS\big{(}A(t)\big{)}}=\int_{T}^{T'}\Gamma_{A(t)}\big{(}\dot{A}(t)\big{)}dt\\
&=-2\int_{[T,T']\times M}Tr(F_{A(t)}\wedge dt\wedge\dot{A}(t))\wedge\ast_{M}P\\
&=-\int_{[T,T']\times M}Tr(F_{\textbf{A}}\wedge F_{\textbf{A}})\wedge\ast Q_{Z}+\int_{T}^{T'}\Big{(}\int_{M}Tr(F_{A}\wedge F_{A})\wedge\ast_{M}Q\Big{)}dt\\
&=\int_{[T,T']\times M}Tr(F_{\textbf{A}}\wedge\ast F_{\textbf{A}})-(n-3)\int_{T}^{T'}\big{(}CS(A(t))-CS(A(\infty))\big{)}dt\\
\end{split}
\end{equation}
\end{proof}
We set
$$J(T)=\int_{T}^{\infty}\|F_{\textbf{A}}\|_{L^{2}}^{2}=-\int_{[T,\infty)\times M}Tr(F_{\textbf{A}}\wedge\ast F_{\textbf{A}}).$$
On the one hand, we can express $J(T)$ as the integration of $Tr(F_{\textbf{A}}\wedge F_{\textbf{A}})\wedge\ast Q_{Z}$,\ since $\textbf{A}$ is an instanton.
$$J(T)=\int_{[T,\infty)\times M}Tr(F_{\textbf{A}}\wedge F_{\textbf{A}})\wedge\ast Q_{Z}$$
From (\ref{2.2}),\ taking the limit over finite tubes $(T,T')\times M$ with $T'\rightarrow +\infty$ we see that
\begin{equation}\label{3.5}
J(T)=CS(A(T))-CS(A_{\infty})-(n-3)\int_{T}^{\infty}\big{(}CS(A(t))-CS(A(\infty))\big{)}dt
\end{equation}
where $A(T)$ is the connection over $M$ obtain by restriction to $M\times\{T\}$.\ From (\ref{3.5}),\ we can obtain the $T$ derivative of $J$ as
\begin{equation}\label{3.2}
\frac{d}{dT}J(T)=\frac{d}{dT}\big{(}CS(A(T))-CS(A(\infty))\big{)}+(n-3)\big{(}CS(A(T))-CS(A_{\infty})\big{)}
\end{equation}
On the other hand, the $T$ derivative of $J(T)$ can be expressed as minus the integration over $M\times\{T\}$ of the curvature density $|F_{\textbf{A}}|^{2}$,\ and this is exactly the $n$-dimensional curvature density $|F_{A(T)}|^{2}$ plusing the density $|\dot{A}|^{2}$.\ By the relation (\ref{1.2}) and (\ref{1.3}) between the two components of the curvature for an instanton,\ we have
\begin{equation}\nonumber
\begin{split}
\|F_{A(T)}\|^{2}_{L^{2}(M)}&=-\int_{M}Tr(F_{A(T)}\wedge\ast_{M}F_{A(T)})\\
&=-\int_{M}Tr\big{(}F_{A(T)}\wedge(-\dot{A}(T)\wedge\ast_{M}P)\big{)}\\
&\quad+\int_{M}Tr(F_{A(T)}\wedge F_{A(T)})\wedge\ast_{M}Q\\
&=\|\dot{A}\|_{L^{2}(M)}^{2}-(n-3)\big{(}CS(A(T))-CS(A_{\infty})\big{)}\\
\end{split}
\end{equation}
Thus
\begin{eqnarray}\label{3.3}
\frac{d}{dT}J(T)&=-2\|F_{A(T)}\|^{2}_{L^{2}(M)}-(n-3)\big{(}CS(A(T))-CS(A_{\infty})\big{)}\\ \label{3.6}
&=-2\|\dot{A}\|_{L^{2}(M)}^{2}+(n-3)\big{(}CS(A(t))-CS(A_{\infty})\big{)}
\end{eqnarray}
From (\ref{3.2}) and (\ref{3.3}),\ we have
$$\frac{d}{dT}\big{(}CS(A(T))-CS(A_{\infty})\big{)}+2(n-3)\big{(}CS(A(T))-CS(A_{\infty})\big{)}\leq 0$$
From (\ref{3.2}) and (\ref{3.6}),\ we have
$$ \frac{d}{dT}\big{(}CS(A(T))-CS(A_{\infty})\big{)}\leq 0$$
It's easy to see these imply that $\big{(}CS(A(t))-CS(A_{\infty})\big{)}$ is non-negative and decays exponentially,
\begin{equation}\label{3.7}
0\leq\big{(}CS(A(T))-CS(A_{\infty})\big{)}\leq Ce^{-(2n-6)T}
\end{equation}
We introduce a parameter $\delta$ and set
$$L_{\delta}(T):=\int_{T}^{\infty}e^{\delta t}\|F_{\textbf{A}}\|_{L^{2}(M)}^{2}dt$$
\begin{thm}\label{3.8}
Let $\textbf{A}$ be an instanton with $L^{2}$-bounded curvature on $Z=\mathbf{R}\times M$,\ here $M$ is a closed Riemannian n-manifold,\ $n\geq 4$,\ which admits a smooth $3$-form $P$ and a smooth $4$-form $Q$ those satisfy equations (\ref{1.2}) and (\ref{1.3}).\ Then there is a constant $C$,\ such that
$$L_{\delta}(t)\leq Ce^{(\delta-2n+6)t}$$
where $0<\delta<2n-6$.
\end{thm}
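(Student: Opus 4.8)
The plan is to bootstrap the desired weighted estimate directly from the exponential decay of the Chern--Simons functional already recorded in (\ref{3.7}). Write $f(t):=\|F_{\textbf{A}}\|^{2}_{L^{2}(M\times\{t\})}=\|F_{A_{t}}\|^{2}_{L^{2}(M)}+\|\dot{A}\|^{2}_{L^{2}(M)}$, so that $J(T)=\int_{T}^{\infty}f(t)\,dt$ and $J'(T)=-f(T)$, while $L_{\delta}(T)=\int_{T}^{\infty}e^{\delta t}f(t)\,dt$. The first step is to upgrade (\ref{3.7}) to an estimate for $J$ itself. By Proposition \ref{3.4} together with the second expression for $CS$ in (\ref{2.3}) the limiting value $CS(A_{\infty})$ vanishes, so (\ref{3.5}) reads $J(T)=CS(A_{T})-(n-3)\int_{T}^{\infty}CS\big{(}A(t)\big{)}\,dt$. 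Since $n\geq 4$ and $CS(A(t))\geq 0$ by (\ref{3.7}), the second term is nonnegative and hence
\begin{equation}\nonumber
0\leq J(T)\leq CS(A_{T})\leq C e^{-(2n-6)T}.
\end{equation}

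The second step is an integration by parts against the weight $e^{\delta t}$. On a finite interval $[T,S]$, using $f=-J'$,
\begin{equation}\nonumber
\int_{T}^{S}e^{\delta t}f(t)\,dt=-\int_{T}^{S}e^{\delta t}J'(t)\,dt=e^{\delta T}J(T)-e^{\delta S}J(S)+\delta\int_{T}^{S}e^{\delta t}J(t)\,dt .
\end{equation}
Because $J(t)\leq Ce^{-(2n-6)t}$ and $0<\delta<2n-6$, the boundary term $e^{\delta S}J(S)\to 0$ as $S\to\infty$ and $\int_{T}^{S}e^{\delta t}J(t)\,dt$ converges; this simultaneously shows that $L_{\delta}(T)$ is finite and yields
\begin{equation}\nonumber
L_{\delta}(T)=e^{\delta T}J(T)+\delta\int_{T}^{\infty}e^{\delta t}J(t)\,dt .
\end{equation}

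The third step is to substitute the bound on $J$: $e^{\delta T}J(T)\leq Ce^{(\delta-2n+6)T}$, and since $\delta-2n+6<0$,
\begin{equation}\nonumber
\delta\int_{T}^{\infty}e^{\delta t}J(t)\,dt\leq \delta C\int_{T}^{\infty}e^{(\delta-2n+6)t}\,dt=\frac{\delta C}{2n-6-\delta}\,e^{(\delta-2n+6)T}.
\end{equation}
Adding the two contributions gives $L_{\delta}(T)\leq \frac{(2n-6)C}{2n-6-\delta}\,e^{(\delta-2n+6)t}$, which is the asserted estimate. There is no real obstacle once (\ref{3.7}) is available; the only points needing care are the use of Proposition \ref{3.4} and (\ref{2.3}) to guarantee $CS(A_{\infty})=0$ (so that the clean inequality $J(T)\leq CS(A_{T})$ holds), and the passage to the limit $S\to\infty$ in the integration by parts, where the strict inequality $\delta<2n-6$ is exactly what is needed both to kill the boundary term and to make $L_{\delta}$ finite.
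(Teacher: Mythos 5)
Your argument is correct and rests on essentially the same two ingredients as the paper's proof: the identity relating the slicewise energy to $CS(A_{T})$ and $\frac{d}{dT}CS(A_{T})$ (equations (\ref{3.2}), (\ref{3.5})) together with the exponential decay $0\leq CS(A_{T})\leq Ce^{-(2n-6)T}$ of (\ref{3.7}); the paper substitutes the slicewise identity directly into $L_{\delta}$ and discards the sign-definite derivative term, whereas you first extract $J(T)\leq CS(A_{T})$ and then integrate by parts against $e^{\delta t}$. Your variant is a clean reorganization of the same computation (and, as a bonus, it avoids the sign slip in the displayed identity for $\|F_{\textbf{A}}\|^{2}_{L^{2}(M)}$ at the start of the paper's proof), with the role of the hypothesis $0<\delta<2n-6$ made explicit in killing the boundary term and ensuring convergence.
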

\begin{proof}
From (\ref{3.2}),\ we get
$$\|F_{\textbf{A}}\|_{L^{2}(M)}^{2}=-\frac{d}{dt}\big{(}CS(A(t))-CS(A_{\infty})\big{)}-(n-3)\big{(}CS(A(t))-CS(A_{\infty})$$
Then
\begin{equation}\nonumber
\begin{split}
L_{\delta}(T)&=-\int_{T}^{\infty}e^{\delta t}\frac{d}{dt}\big{(}CS(A(t))-CS(A_{\infty})\big{)}\\
&\quad-(n-3)\int_{T}^{\infty}e^{\delta t}\big{(}CS(A(t))-CS(A_{\infty})\big{)}\\
&=-e^{\delta t}\big{(}CS(A(t))-CS(A_{\infty})\big{)}|^{\infty}_{T}+\int_{T}^{\infty}\delta e^{\delta t}\big{(}CS(A(t))-CS(A_{\infty})\big{)}\\
&\quad-(n-3)\int_{T}^{\infty}e^{\delta t}\big{(}CS(A(t))-CS(A_{\infty})\big{)}\\
&\leq e^{\delta T}\big{(}CS(A(T))-CS(A_{\infty})\big{)}+\int_{T}^{\infty}\delta e^{\delta t}\big{(}CS(A(t))-CS(A_{\infty})\big{)}\\
&\leq Ce^{(\delta-2n+6)T}+\int_{T}^{\infty}C\delta e^{(\delta-2n+6)t}dt\\
&=(C+\frac{C\delta}{2n-6-\delta})e^{(\delta-2n+6)T}\\
\end{split}
\end{equation}
\end{proof}

\subsection{Conformal Transformation}

We consider $\bar{Z}=C(M)$,\ where $C(M)$ is a cone over $M$ with metric
$$g_{\bar{Z}}=d r^{2}+r^{2}g_{M}=e^{2t}(dt^{2}+g_{M}),$$
where $r:=e^{t}$. \\  \\
It means that the cone $C(M)$ is conformally equivalent to the cylinder
$$Z=\mathbf{R}\times M$$
with the metric
$$g_{Z}=dt^{2}+g_{M}.$$
Furthermore,\ we can show that the instanton equation on the cone $\bar{Z}=C(M)$ is related with the instanton equation on the cylinder $Z=\mathbf{R}\times M$ ,
$$\bar{\ast}F_{\textbf{A}}+\bar{\ast}Q_{\bar{Z}}\wedge F_{\textbf{A}}=e^{(n-3)t}(\ast F_{\textbf{A}}+\ast Q_{Z}\wedge F_{\textbf{A}})=0,$$
where $dimC(M)=dimZ=n+1$,\ $\bar{\ast}$ is the $\ast$-operator in $C(M)$.And
\begin{equation}
Q_{\bar{Z}}=e^{4t}(dt\wedge P+Q).
\end{equation}
In the other word,\ equation on $C(M)$ is equivalent to the equation on $\mathbf{R}\times M$ after rescaling of the metric.\ So we can only consider the instanton equation
$$\bar{\ast}F_{\textbf{A}}+\bar{\ast}Q_{\bar{Z}}\wedge F_{\textbf{A}}=0$$
on the cone $C(M)$ over $M$.\ Since
$$\bar{\ast}_{Z}Q_{\bar{Z}}=e^{(n-3)}\ast{Q_{Z}}=e^{(n-3)t}(\ast_{M}P+dt\wedge\ast_{M}Q),$$
by direct calculate,\  $\bar{\ast}_{Z}Q_{\bar{Z}}$ is closed.\ This implies that the instantons also satisfy the pure Yang-Mills equations with respect to the metric $g_{\bar{Z}}$.
\begin{prop}
Let $\textbf{A}$ be a instanton on $Z=\mathbf{R}\times M$,\ here $M$ is a closed Riemannian $n$-manifold,\ $n\geq 4$,\ which admits a smooth $3$-form $P$ and a smooth $4$-form $Q$ those satisfy equations (\ref{1.2}) and (\ref{1.3}).\ Then the connection $A$ is a Yang-Mills connection on $C(M)$.
\end{prop}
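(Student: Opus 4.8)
The plan is to deduce the Yang--Mills equation $d_{\textbf{A}}^{*}F_{\textbf{A}}=0$ on $C(M)$ straight from the instanton equation on the cone, using nothing more than the Bianchi identity together with the closedness of $\bar{\ast}Q_{\bar{Z}}$ recorded just above the statement.

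First I would rewrite the cone instanton equation in the equivalent form $\bar{\ast}F_{\textbf{A}}=-\bar{\ast}Q_{\bar{Z}}\wedge F_{\textbf{A}}$, noting that on $C(M)$, which has dimension $n+1$, the form $\bar{\ast}Q_{\bar{Z}}$ has degree $n-3$. Applying the exterior covariant derivative $d_{\textbf{A}}$ to both sides and using the Leibniz rule for the wedge of a scalar-valued form with an $\mathrm{ad}$-valued form gives
$$d_{\textbf{A}}\bar{\ast}F_{\textbf{A}}=-\big(d\,\bar{\ast}Q_{\bar{Z}}\big)\wedge F_{\textbf{A}}-(-1)^{n-3}\,\bar{\ast}Q_{\bar{Z}}\wedge d_{\textbf{A}}F_{\textbf{A}}.$$
The second term vanishes by the Bianchi identity $d_{\textbf{A}}F_{\textbf{A}}=0$, and the first term vanishes because $\bar{\ast}Q_{\bar{Z}}$ is closed. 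Hence $d_{\textbf{A}}\bar{\ast}F_{\textbf{A}}=0$, and applying $\bar{\ast}$ once more yields $d_{\textbf{A}}^{*}F_{\textbf{A}}=\pm\,\bar{\ast}\,d_{\textbf{A}}\bar{\ast}F_{\textbf{A}}=0$, i.e.\ $\textbf{A}$ is a Yang--Mills connection on $C(M)$.

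The only point that is not purely formal is the closedness $d\,\bar{\ast}Q_{\bar{Z}}=0$, and this is exactly where passing to the cone (rather than staying on the cylinder) matters. On $Z=\mathbf{R}\times M$ one has $\ast_{Z}Q_{Z}=\ast_{M}P-dt\wedge\ast_{M}Q$ (up to the sign convention of the product Hodge star), whose differential is $-dt\wedge d\ast_{M}Q=-(n-3)\,dt\wedge\ast_{M}P$, which is nonzero, so a cylinder instanton only satisfies a Yang--Mills equation with torsion. Under the conformal change $g_{\bar{Z}}=e^{2t}g_{Z}$ one has $\bar{\ast}Q_{\bar{Z}}=e^{(n-3)t}\ast_{Z}Q_{Z}$, and differentiating this product produces the extra term $(n-3)e^{(n-3)t}\,dt\wedge\ast_{M}P$, which cancels the torsion term precisely because of $d\ast_{M}Q=(n-3)\ast_{M}P$ together with $d\ast_{M}P=0$; both of these are contained in equation (\ref{1.3}) — the second follows by applying $d$ to (\ref{1.3}) and using $n\geq4$. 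So I expect no genuine obstacle: once the closedness of $\bar{\ast}Q_{\bar{Z}}$ is in place (as it already is in the text), the proposition is an immediate consequence of the Bianchi identity.
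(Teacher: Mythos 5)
Your argument is exactly the paper's: the proposition is justified there by the single remark that $\bar{\ast}Q_{\bar{Z}}$ is closed, so applying $d_{\textbf{A}}$ to $\bar{\ast}F_{\textbf{A}}=-\bar{\ast}Q_{\bar{Z}}\wedge F_{\textbf{A}}$ and invoking the Bianchi identity gives $d_{\textbf{A}}\bar{\ast}F_{\textbf{A}}=0$; you have simply supplied the verification of $d\,\bar{\ast}Q_{\bar{Z}}=0$ that the paper omits. That verification is correct (modulo a harmless sign in your expression for $\ast_{Z}Q_{Z}$, which you already flagged as convention-dependent): the term $(n-3)e^{(n-3)t}dt\wedge\ast_{M}P$ from differentiating the conformal factor cancels against $d\ast_{M}Q=(n-3)\ast_{M}P$, and $d\ast_{M}P=0$ follows from (\ref{1.3}).
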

After rescaling of the metric,\
$$F_{\textbf{A}}\wedge\bar{\ast}F_{\textbf{A}}=e^{(n-3)t}F_{\textbf{A}}\wedge\ast F_{\textbf{A}}.$$
The curvature $F_{\textbf{A}}$ is $L^{2}$-bounded over $Z$.\ We shall prove that the curvature $F_{\textbf{A}}$ is still $L^{2}$-bounded over $C(M)$ by the following lemma.
\begin{lem}
Let $\textbf{A}$ be a instanton on $Z=\mathbf{R}\times M$ with $L^{2}$-bounded curvature $F_{\textbf{A}}$,\ i.e.
$$\int_{\mathbf{R}\times M}\langle F_{\textbf{A}}\wedge\ast F_{\textbf{A}}\rangle<+\infty,$$\
here $M$ is a closed Riemannian n-manifold,\ $n\geq 4$,\ which admits a 3-form $P$ and a 4-form $Q$ those satisfy equations (\ref{1.2}) and (\ref{1.3}).\ Then
$$\int_{\mathbf{R}\times M}\langle F_{\textbf{A}}\wedge\bar{\ast}F_{\textbf{A}}\rangle<+\infty.$$
\end{lem}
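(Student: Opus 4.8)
The plan is to reduce the assertion to a weighted $L^{2}$ bound on the cylinder and then quote Theorem \ref{3.8}. By the conformal identity recorded just above, $F_{\textbf{A}}\wedge\bar{\ast}F_{\textbf{A}}=e^{(n-3)t}F_{\textbf{A}}\wedge\ast F_{\textbf{A}}$, so integrating over $Z=\mathbf{R}\times M$ and writing the curvature density in terms of the slices $M\times\{t\}$ gives
$$\int_{\mathbf{R}\times M}\langle F_{\textbf{A}}\wedge\bar{\ast}F_{\textbf{A}}\rangle=\int_{\mathbf{R}}e^{(n-3)t}\,\|F_{\textbf{A}}\|_{L^{2}(M)}^{2}\,dt .$$
It therefore suffices to show that this last integral is finite, and I would do so by splitting it at $t=0$.

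For $t\le 0$ there is essentially nothing to prove: since $n\ge 4$ we have $n-3>0$, hence $e^{(n-3)t}\le 1$ on $(-\infty,0]$, and thus
$$\int_{-\infty}^{0}e^{(n-3)t}\,\|F_{\textbf{A}}\|_{L^{2}(M)}^{2}\,dt\le\int_{-\infty}^{0}\|F_{\textbf{A}}\|_{L^{2}(M)}^{2}\,dt\le\int_{\mathbf{R}\times M}\langle F_{\textbf{A}}\wedge\ast F_{\textbf{A}}\rangle<\infty$$
by hypothesis. For $t\ge 0$ the weight $e^{(n-3)t}$ grows, and here the exponential decay of the curvature established in the previous subsection is exactly what is needed. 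The numerical point is that for $n\ge 4$ the exponent $\delta=n-3$ lies in the open interval $(0,2n-6)$, so Theorem \ref{3.8} applies with this choice of $\delta$ and yields
$$\int_{0}^{\infty}e^{(n-3)t}\,\|F_{\textbf{A}}\|_{L^{2}(M)}^{2}\,dt=L_{n-3}(0)\le C .$$
Adding the two contributions gives $\int_{\mathbf{R}\times M}\langle F_{\textbf{A}}\wedge\bar{\ast}F_{\textbf{A}}\rangle<\infty$, as claimed.

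I do not expect a genuine obstacle: all the analytic content sits in Theorem \ref{3.8} and, behind it, the exponential decay estimate (\ref{3.7}) for the Chern-Simons functional. The only thing to watch carefully is that the weight exponent $n-3$ forced on us by the conformal factor must be an admissible value of $\delta$ in Theorem \ref{3.8}, i.e.\ one needs $0<n-3<2n-6$; this is equivalent to $n>3$, which is guaranteed by the standing hypothesis $n\ge 4$ (the borderline case $n=4$ gives $\delta=1<2$ and is still fine).
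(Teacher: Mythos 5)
Your proof is correct and follows essentially the same route as the paper: the paper likewise writes the cone energy as $\int e^{(n-3)t}\langle F_{\textbf{A}}\wedge\ast F_{\textbf{A}}\rangle$, splits the integral at a finite time, bounds the backward half by the $L^{2}$ hypothesis, and controls the forward half by $L_{n-3}(T)$ via Theorem \ref{3.8} with $\delta=n-3$. Your explicit check that $0<n-3<2n-6$ for $n\geq 4$ is a worthwhile detail the paper leaves implicit.
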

\begin{proof}
From theorem \ref{3.8},\ we have
$$L_{n-3}(T)=\int_{T}^{\infty}e^{(n-3)t}\|F_{\textbf{A}}\|_{L^{2}(M)}^{2}dt\leq Ce^{-(n-3)T}$$
Then for any constant $T\in[0,\infty)$,\ we have
\begin{equation}\nonumber
\begin{split}
\int_{\mathbf{R}\times M}\langle F_{\textbf{A}}\wedge\bar{\ast}F_{\textbf{A}}\rangle&=\int_{\mathbf{R}\times M}e^{(n-3)t}\langle F_{\textbf{A}}\wedge\ast F_{\textbf{A}}\rangle\\
&=\int_{(-\infty,T]\times M}+\int_{[T,\infty)\times M}e^{(n-3)t}\langle F_{\textbf{A}}\wedge\ast F_{\textbf{A}}\rangle\\
&\leq e^{(n-3)T}\int_{\mathbf{R}\times M}\langle F_{\textbf{A}}\wedge\ast F_{\textbf{A}}\rangle+L_{n-3}(T)<+\infty\\
\end{split}
\end{equation}
\end{proof}
We only consider instantons with $L^{2}$-bounded curvature on the cone of $M$.\ In the next section,\ we will give a vanishing theorem for Yang-Mills connection with finite energy on the cone of $M$.

\section{Vanishing Theorem for Yang-Mills}

In this section,\ notations may be different from the previous sections.\ We use the conformal technique to give the vanishing theorem for Yang-Mills connection on the cone of $M$.\

Let $M$ be a Riemannian $n+1$-manifold.\ Suppose $X\in\Gamma(TM) $ is a conformal vector field on $(M,g)$,\ namely,
$$\L_{X}g=2fg$$
where $f\in C^{\infty}(M)$.\ Here $\L_{X}$ denotes the Lie derivative with respect to $X$.

The vector field $X$ generates a family of local conformal diffeomorphism.
$$ F_{t}=exp(tX):\ M\rightarrow M$$
This family of local conformal diffeomorphism can induce a bundle automorphism,\ $\tilde{F}_{t}$,\ of the principal bundle $P$.\ Such a lift is readily obtained from a connection on $P$ by setting $\widetilde{F}_{t}=\exp(t\widetilde{X})$ where $\widetilde{X}$ is the horizontal lift of $X$ on $P$.\ If $A$ is the connection form we have $i_{\widetilde{X}}A=0$ since $\widetilde{X}$ is horizontal.\ Thus the Lie derivative of $A$ is can be expressed in terms of the curvature $F_{A}$:\ $\L_{\widetilde{X}}A=i_{\widetilde{X}}dA+di_{\widetilde{X}}A=i_{\widetilde{X}}(F_{A}-\frac{1}{2}[A,A])=i_{X}F_{A}$.\ And hence $\widetilde{F}^{\ast}_{t}A=A+ti_{X}F_{A}+o(t^{2}).$\ One can see the detailed process in \cite{T.Parker}.

We will consider the variation of the Yang-Mills functional under the family of diffeomorphism.
$$YM(A,g)=\int_{M}|F_{A}|^{2}dVol_{g}$$
where $dVol_{g}=\sqrt{detg}dx$ is the volume form of $M$.
$$\la:=|F_{A}|^{2}dVol_{g}$$
is an $n$-form on $M$.\ For any $\eta\in C^{\infty}_{0}(M)$
\begin{equation}\nonumber
\begin{split}
0&=\int_{M}d[(i_{X}\la)\eta]=\int_{M}\eta d(i_{X}\la)+\int_{M}d\eta\wedge i_{X}\la\\
&=\int_{M}\eta \L_{X}\la+\int_{M}d\eta\wedge i_{X}\la\\
\end{split}
\end{equation}
that is
\begin{equation}\label{4.1}
\int_{M}\eta \L_{X}\la=-\int_{M}d\eta\wedge i_{X}\la
\end{equation}
where $i_{X}$ stands for the inner product with the vector $X$.\ Now,\ let us compute $\L_{X}\la$.
\begin{lem}
Let $\la=Tr(F_{A}\wedge\ast F_{A})$ and $X$ be a smooth vector field on $M$ satisfying $\L_{X}g=2fg$,\ then
$$\L_{X}\la=(n-3)f\la+2Tr\big{(}d_{A}(i_{X}F_{A})\wedge\ast F_{A}\big{)}$$
\end{lem}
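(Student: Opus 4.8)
The plan is to compute $\L_X\la$ by differentiating at $t=0$ the pullback $F_t^{*}\la$ along the flow $F_t=\exp(tX)$, keeping track separately of the effect of the flow on the curvature and on the metric that is hidden inside $\ast$. Lift $F_t$ to a bundle automorphism $\tilde F_t$ of $P$ as in \cite{T.Parker}; then $\tilde F_t^{*}F_A=F_{\tilde F_t^{*}A}$ is a genuine $\mathrm{ad}P$-valued $2$-form, $\ast$ is natural under diffeomorphisms, and
$$F_t^{*}\la=Tr\big(\tilde F_t^{*}F_A\wedge\ast_{F_t^{*}g}(\tilde F_t^{*}F_A)\big).$$
Since the Lie derivative so defined commutes with $Tr$ and is a derivation over $\wedge$, differentiating at $t=0$ gives
$$\L_X\la=Tr(\L_X F_A\wedge\ast F_A)+Tr(F_A\wedge\L_X(\ast F_A)),$$
and it remains to identify the two summands.

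For the first, Cartan's formula for the exterior covariant derivative together with the Bianchi identity $d_A F_A=0$ gives $\L_X F_A=d_A(i_X F_A)+i_X(d_A F_A)=d_A(i_X F_A)$. For the second I would use the conformal weight of the Hodge operator. Because $X$ is conformal, $F_t^{*}g=g+2tf\,g+o(t)$, i.e.\ $F_t^{*}g=e^{2tf}g+o(t)$; and on $2$-forms over the $(n+1)$-manifold $M$ one has $\ast_{e^{2\phi}g}=e^{(n+1-4)\phi}\ast_g=e^{(n-3)\phi}\ast_g$ (the volume form contributes $e^{(n+1)\phi}$ and each of the two metric contractions costs $e^{-2\phi}$). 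Hence $\ast_{F_t^{*}g}=e^{(n-3)tf}\ast_g+o(t)$, and differentiating $\ast_{F_t^{*}g}(\tilde F_t^{*}F_A)$ at $t=0$ yields
$$\L_X(\ast F_A)=(n-3)f\,\ast F_A+\ast\,\L_X F_A=(n-3)f\,\ast F_A+\ast\,d_A(i_X F_A).$$

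Substituting both into the Leibniz expansion gives
$$\L_X\la=Tr\big(d_A(i_X F_A)\wedge\ast F_A\big)+(n-3)f\,Tr(F_A\wedge\ast F_A)+Tr\big(F_A\wedge\ast\,d_A(i_X F_A)\big).$$
Finally, for any two $\mathrm{ad}P$-valued $2$-forms $\a,\b$ one has $Tr(\a\wedge\ast\b)=Tr(\b\wedge\ast\a)$, since $\a\wedge\ast\b=\langle\a,\b\rangle\,dg$ is symmetric in $\a,\b$ and the trace form is symmetric; applied to $\a=F_A$, $\b=d_A(i_X F_A)$ this collapses the two outer terms into $2\,Tr\big(d_A(i_X F_A)\wedge\ast F_A\big)$, which is the claimed identity. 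The only delicate points are the ones I flagged: justifying that the lift $\tilde F_t$ makes $\L_X$ commute with $Tr$ and $\wedge$ and that $\L_X F_A=d_A(i_X F_A)$ with $d_A$ the covariant derivative of the original connection, and extracting the dimension-dependent weight $(n-3)$ with the correct sign. None of these is deep, but they are where the bookkeeping must be done carefully.
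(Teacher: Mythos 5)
Your proposal is correct and follows essentially the same route as the paper: linearize the pullback along the (lifted) flow at $t=0$, use $\tilde F_t^{*}A=A+t\,i_XF_A+o(t^{2})$ (equivalently $\L_XF_A=d_A(i_XF_A)$ via the covariant Cartan formula and Bianchi), and extract the conformal weight $n-3$, which in the paper is attributed to $\la$ as a whole and in your version to $\ast$ acting on $2$-forms over the $(n+1)$-manifold --- the same fact. The only cosmetic difference is that you organize the computation by a Leibniz split into $\L_XF_A$ and $\L_X(\ast F_A)$ plus the symmetry of the pairing $Tr(\a\wedge\ast\b)$, while the paper multiplies out the two first-order expansions directly.
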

\begin{proof}
In local coordinates $\{x^{i}\}_{i=1}^{n}$,\ the n-form $\la$
$$\la=Tr(F_{A}\wedge\ast F_{A})=\sum g^{ij}g^{kl}trF_{ik}F_{jl}\sqrt{detg}dx^{1}\wedge\ldots\wedge dx^{n}$$
is conformal of weight $n-3$,\ i.e.\ $\la(A,e^{2f}g)=e^{(n-3)f}\la(A,g)$ for any $f\in C^{\infty}.$
The vector field $X$ satisfies $\L_{X}g=2fg$,\ so
$$F_{t}^{\ast}g=exp(2\int_{0}^{t}F^{\ast}_{s}f)g.$$
Since $\la$ is conformal with wight $n-3$,
\begin{equation}\nonumber
\begin{split}
(F_{t}^{\ast}\la)(A,g)&=\la(\tilde{F}_{t}^{\ast}A,F_{t}^{\ast}g)=exp\big{(}(n-3)\int_{0}^{t}F_{t}^{\ast}f\big{)}\cdot\la(\tilde{F}_{t}^{\ast}A,g)\\
&=\big{(}1+(n-3)tf+o(t^{2})\big{)}\\
&\times Tr\big{(}F_{A}\wedge\ast F_{A}+2td_{A}(i_{X}F_{A})\wedge\ast F_{A}+o(t^{2})\big{)}\\
&=Tr(F_{A}\wedge\ast F_{A}+t(n-3)fF_{A}\wedge\ast F_{A})\\
&+2tTr(d_{A}(i_{X}F_{A})\wedge\ast F_{A})+o(t^{2})\\
\end{split}
\end{equation}
where we used the fact $F_{t}^{\ast}f=f+o(t)$ and $\tilde{F}_{t}^{\ast}A=A+ti_{X}F_{A}+o(t^{2})$.
By the definition of Lie derivative
\begin{equation}\label{4.2}
\L_{X}\la=\frac{d}{dt}(F_{t}^{\ast}\la)|_{t=0}=(n-3)f\la+2Tr\big{(}d_{A}(i_{X}F_{A})\wedge\ast F_{A}\big{).}
\end{equation}
\end{proof}
We consider $M=\mathbf{R}\times N$ with metric
$$g_{M}=e^{2t}(dt^{2}+g_{N})$$
where $N$ is a compact Riemannian $n$-manifold,\ $n\geq4$,\ with metric $g_{N}$.\ Then the vector field $X=\frac{\pa}{\pa t} $ satisfies
$$\L_{X}g_{M}=X\cdot e^{2t}(dt^{2}+g_{N})+e^{2t}(\L_{X}dt^{2})=2g_{M},$$
and in this case,\ $f=1$.
\begin{thm}
Let $(M,g_{M})$ be a Riemannian manifold as above.\ Let $A$ be a Yang-Mills connection with $L^{2}$-bounded curvature $F_{A}$,\ i.e.
$$\int_{M}|F_{A}|^{2}<+\infty$$
over $M$.\ Then $A$ must be a flat connection.
\end{thm}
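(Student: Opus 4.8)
The plan is to test the divergence identity (\ref{4.1}) against cut-off functions depending only on $t$ and exhaust $M$. First I would put the previous Lemma into an exact-divergence form. Since $A$ is Yang--Mills we have $d_{A}\ast F_{A}=0$, and because the trace annihilates commutators,
\[
d\,Tr\big((i_{X}F_{A})\wedge\ast F_{A}\big)=Tr\big(d_{A}(i_{X}F_{A})\wedge\ast F_{A}\big)-Tr\big((i_{X}F_{A})\wedge d_{A}\ast F_{A}\big)=Tr\big(d_{A}(i_{X}F_{A})\wedge\ast F_{A}\big).
\]
Hence, with $f=1$ on the cone $M=\mathbf{R}\times N$ and $\omega:=2\,Tr\big((i_{X}F_{A})\wedge\ast F_{A}\big)$ (an $n$-form), the Lemma reads $\L_{X}\la=(n-3)\la+d\omega$.

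Substituting this into (\ref{4.1}), for $\eta\in C_{0}^{\infty}(M)$ we get $(n-3)\int_{M}\eta\,\la+\int_{M}\eta\,d\omega=-\int_{M}d\eta\wedge i_{X}\la$, and integrating the middle term by parts (no boundary term, $\eta$ compactly supported) yields
\[
(n-3)\int_{M}\eta\,\la=\int_{M}d\eta\wedge\omega-\int_{M}d\eta\wedge i_{X}\la .
\]
Now take $\eta=\eta_{T}(t)$ with $\eta_{T}\equiv1$ on $\{|t|\le T\}$, $\mathrm{supp}\,\eta_{T}\subset\{|t|\le T+1\}$, and $|\eta_{T}'|\le C$ uniformly in $T$, so $d\eta_{T}=\eta_{T}'(t)\,dt$ and the right-hand side is supported in the slabs $\{T\le|t|\le T+1\}$. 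Since $X=\pa/\pa t$ has $dt(\pa/\pa t)=1$, one checks $dt\wedge i_{X}\mu=\mu$ for any top-degree form $\mu$; in particular $d\eta_{T}\wedge i_{X}\la=\eta_{T}'(t)\,\la$, which is bounded by $C\la$ on $\mathrm{supp}\,\eta_{T}'$.

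The crucial point is the pointwise bound $|d\eta_{T}\wedge\omega|\le C\la$, and here the conformal scaling of $g_{M}=e^{2t}(dt^{2}+g_{N})$ does the work. Decomposing $F_{A}=dt\wedge(i_{X}F_{A})+\psi$ with $i_{X}\psi=0$, we obtain $dt\wedge\omega=2\,Tr\big((dt\wedge i_{X}F_{A})\wedge\ast F_{A}\big)=2\langle dt\wedge i_{X}F_{A},\,F_{A}\rangle\,dg$ (inner product of $2$-forms, up to a constant from $Tr$). Because $|dt|_{g_{M}}=e^{-t}$ while $|i_{X}F_{A}|_{g_{M}}\le C|\pa_{t}|_{g_{M}}|F_{A}|_{g_{M}}=Ce^{t}|F_{A}|_{g_{M}}$, the two exponentials cancel: $|dt\wedge i_{X}F_{A}|_{g_{M}}\le C|F_{A}|_{g_{M}}$, whence $|dt\wedge\omega|\le C|F_{A}|_{g_{M}}^{2}\,dg=C\la$ with $C$ independent of $t$. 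Combining, $(n-3)\int_{M}\eta_{T}\,\la\le C\int_{\{T\le|t|\le T+1\}}\la$. Since $\la=|F_{A}|^{2}dg\ge0$ is integrable over $M$ by hypothesis, the right-hand side tends to $0$ as $T\to\infty$, while $\int_{M}\eta_{T}\la\uparrow\int_{M}\la$ by monotone convergence. As $n\ge4$ gives $n-3>0$, we conclude $\int_{M}|F_{A}|^{2}=0$, i.e. $F_{A}\equiv0$ and $A$ is flat.

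I expect the main obstacle to be exactly this pointwise estimate $|dt\wedge\omega|\le C\la$: one must combine the anti-derivation property of $i_{X}$, the Yang--Mills equation, and the precise powers of $e^{t}$ in the cone metric, so that the a priori divergent factor $|\pa_{t}|_{g_{M}}=e^{t}$ is compensated by $|dt|_{g_{M}}=e^{-t}$. This is also the reason the conformal change to the cone (where $f=1$), rather than the cylinder (where $f=0$), is essential — it simultaneously produces the favourable sign $n-3>0$ in front of $\int_{M}\la$ and keeps the boundary integrand controlled by $\la$. Once the identity $\L_{X}\la=(n-3)\la+d\omega$ and this estimate are in hand, the exhaustion argument is routine.
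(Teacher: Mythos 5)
Your proposal is correct and takes essentially the same route as the paper's own proof: both start from the identity (4.1) combined with the Lemma, use the Yang--Mills equation $d_{A}\ast F_{A}=0$ to discard the $d_{A}(i_{X}F_{A})$ term after integrating by parts, and then kill the remaining cutoff terms using the conformal cancellation $|dt|_{g_{M}}\,|X|_{g_{M}}=1$ together with the integrability of $\lambda$. The only differences are cosmetic --- you package the integration by parts as an exact form $d\omega$ and use a fixed-width transition band with $|\eta_{T}'|\le C$, whereas the paper keeps the two boundary terms separate and uses a band of width $T$ with $|d\eta|\le 2T^{-1}$; in fact your explicit tracking of the weights $|dt|_{g_{M}}=e^{-t}$ and $|\partial_{t}|_{g_{M}}=e^{t}$ is cleaner than the paper's bare assertion $|X(t)|=1$.
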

\begin{proof}
From (\ref{4.1}) and (\ref{4.2}),\ we have
\begin{equation}
\begin{split}
\int_{M}\eta(n-3)\la&=-\int_{M}d\eta\wedge i_{X}\la-2\int_{M}\eta Tr\big{(}d_{A}(i_{X}F_{A})\wedge\ast F_{A}\big{)}\\
&=-\int_{M}d\eta\wedge i_{X}\la-2\int_{M}Tr\big{(}d_{A}(\eta i_{X}F_{A})\wedge\ast F_{A}\big{)}\\
&+2\int_{M}Tr\big{(}(d\eta\wedge i_{X}F_{A})\wedge\ast F_{A}\big{)}\\
&\leq3\int_{M}|d\eta|\cdot|X|\cdot\la
\end{split}
\end{equation}
The second term in the second line vanishes since $A$ is a Yang-Mills connection.

We choose the cut-off function with $\eta(t)=1$ on the interval $|t|\leq T $,\ $\eta(t)=0$ on the interval $|t|\geq 2T $,\ and $|d\eta|\leq 2T^{-1}$.\ Then $d\eta$ has support in $ T\leq|t|\leq 2T $ and $|X(t)|=1,$
$$\int_{M}\eta(n-3)\la\leq\frac{6}{T}\int_{\{T\leq|t|\leq2T\}\times N}\la.$$
Letting $T\rightarrow\infty$ we get
$$\int_{M}(n-3)\la=0,$$
Then $F_{A}=0$.
\end{proof}

\subsection*{Acknowledgment}
I would like to thank O.~Lechtenfeld for kind comments regarding this and it companion article \cite{BILL,HILP} and S.K.~Donaldson for helpful comments regarding his article \cite{Donaldson},\ and H.N.~S\'{a} Earp for helpful comments regarding his article \cite{Earp,Earp2}.\ This work is partially supported by
Wu Wen-Tsun Key Laboratory of Mathematics of Chinese Academy of Sciences at USTC.

\bigskip
\footnotesize

\end{document}